\definecolor{cJakob}{rgb}{0.1,0.45,0.03}
\definecolor{cChristian}{rgb}{0.45,0.03,0.1}
\definecolor{cPaco}{rgb}{0.03,0.1,0.45}
\newtheorem{theorem}{Theorem} 
\newtheorem{corollary}[theorem]{Corollary}
\newtheorem{proposition}[theorem]{Proposition}
\newtheorem{lemma}[theorem]{Lemma}
\theoremstyle{definition}
\newtheorem{definition}[theorem]{Definition}
\newtheorem{remark}[theorem]{Remark}
\newtheorem*{mainquestion}{Main Question}
\newtheorem{example}[theorem]{Example}
\newcommand{\Z}{\mathbb{Z}}
\newcommand{\N}{\Z_{\ge 1}}
\newcommand{\R}{\mathbb{R}}
\newcommand{\Q}{\mathbb{Q}}
\newcommand{\OO}{\mathcal{O}}
\DeclareMathOperator{\glzOperator}{GL}
\newcommand{\glz}[1][3]{\glzOperator_{#1}(\Z)}
\DeclareMathOperator{\affOperator}{Aff}
\newcommand{\affinegroup}[1][3]{\affOperator_{#1}(\Z)}
\DeclareMathOperator{\conv}{conv}
\DeclareMathOperator{\Ehr}{ehr}
\DeclareMathOperator{\affinehull}{aff}
\DeclareMathOperator{\relint}{relint}
\newcommand{\lattice}{\Lambda}
\newcommand{\dissect}{\bigsqcup}
\newcommand{\suchthat}{\,\colon\,}
\newcommand{\tetra}{T}
\title[Ehrhart-equivalent lattice $3$-polytopes are equidecomposable]
{Ehrhart-equivalent $\boldsymbol 3$-polytopes are equidecomposable}
\author{Jakob Erbe}
\address{Jakob Erbe \\ 
  Freie Universit\"at Berlin \\ 
  14195 Berlin \\ Germany}
\email{jakob.erbe@web.de}
\author{Christian Haase}
\address{Christian Haase \\ 
  Freie Universit\"at Berlin \\ 
  14195 Berlin \\ Germany}
\email{haase@math.fu-berlin.de}
\thanks{C.~H.~is supported by the research training group Facets of
  Complexity GRK 2434 of the German Research Foundation DFG} 
\author{Francisco Santos}
\address{Francisco Santos \\ 
Departamento de Matem\'aticas, Estad\'istica y Computaci\'on,
Universidad de Cantabria,
39005 Santander, Spain}
\email{francisco.santos@unican.es}
\thanks{F.~S.~is supported by grant MTM2014-54207-P of the Spanish
  Ministry of Science and grant EVF-2015-230 of the Einstein Foundation Berlin}
\begin{document}

\maketitle

\begin{abstract}
We show that if two lattice $3$-polytopes $P$ and $P'$ have the same Ehrhart function then they are \emph{\( \glz[3] \)-equidecomposable}; that is, they can be partitioned into relatively open simplices $U_1,\dots, U_k$ and $U'_1,\dots,U'_k$ such that $U_i$ and $U'_i$ are unimodularly equivalent, for each $i$.
\end{abstract}

\section{Introduction}
\label{sec:intro}

\subsection{Motivation}
Consider the rational polygons $P$ with vertices $\big\{\binom{-4}{0}, \binom{-1}{0}, \binom{-3}{\frac23}\big\} $ and $P'$ with vertices $\left\{\binom{1}{0}, \binom{3}{0}, \binom{1}{1}\right\} $, depicted here:
\begin{center}
\begin{tikzpicture}[y=-1cm]

\definecolor{penColor}{rgb}{0.52941,0.80784,1}
\path[draw=penColor,semithick,join=round,fill=white!75!black,arrows=-to] (1.5,6) -- (9.7,6);
\path[draw=penColor,semithick,join=round,fill=white!75!black,arrows=-to] (6,6.5) -- (6,4.3);

\path[draw=black,thick,join=round,fill=white!75!black] (2,6) -- (3,5.5) -- (5,6) -- cycle;
\path[draw=black,thick,join=round,fill=white!75!black] (7,6) -- (7,5) -- (9,6) -- cycle;
\path (3.3,6.6) node[text=black,anchor=base west] {\large{}$P$};
\path (7.6,6.6) node[text=black,anchor=base west] {\large{}$P'$};

\path[draw=penColor,semithick,join=round,fill=white!75!black] (5,5.9) -- (5,6.1);
\path[draw=penColor,semithick,join=round,fill=white!75!black] (5.1,6) -- (4.9,6);
\path[draw=penColor,semithick,join=round,fill=white!75!black] (4,5.9) -- (4,6.1);
\path[draw=penColor,semithick,join=round,fill=white!75!black] (4.1,6) -- (3.9,6);
\path[draw=penColor,semithick,join=round,fill=white!75!black] (3,5.9) -- (3,6.1);
\path[draw=penColor,semithick,join=round,fill=white!75!black] (3.1,6) -- (2.9,6);
\path[draw=penColor,semithick,join=round,fill=white!75!black] (2,5.9) -- (2,6.1);
\path[draw=penColor,semithick,join=round,fill=white!75!black] (2.1,6) -- (1.9,6);
\path[draw=penColor,semithick,join=round,fill=white!75!black] (2,4.9) -- (2,5.1);
\path[draw=penColor,semithick,join=round,fill=white!75!black] (2.1,5) -- (1.9,5);
\path[draw=penColor,semithick,join=round,fill=white!75!black] (3,4.9) -- (3,5.1);
\path[draw=penColor,semithick,join=round,fill=white!75!black] (3.1,5) -- (2.9,5);
\path[draw=penColor,semithick,join=round,fill=white!75!black] (4,4.9) -- (4,5.1);
\path[draw=penColor,semithick,join=round,fill=white!75!black] (4.1,5) -- (3.9,5);
\path[draw=penColor,semithick,join=round,fill=white!75!black] (5,4.9) -- (5,5.1);
\path[draw=penColor,semithick,join=round,fill=white!75!black] (5.1,5) -- (4.9,5);
\path[draw=penColor,semithick,join=round,fill=white!75!black] (6,4.9) -- (6,5.1);
\path[draw=penColor,semithick,join=round,fill=white!75!black] (6.1,5) -- (5.9,5);
\path[draw=penColor,semithick,join=round,fill=white!75!black] (7,4.9) -- (7,5.1);
\path[draw=penColor,semithick,join=round,fill=white!75!black] (7.1,5) -- (6.9,5);
\path[draw=penColor,semithick,join=round,fill=white!75!black] (8,4.9) -- (8,5.1);
\path[draw=penColor,semithick,join=round,fill=white!75!black] (8.1,5) -- (7.9,5);
\path[draw=penColor,semithick,join=round,fill=white!75!black] (9,4.9) -- (9,5.1);
\path[draw=penColor,semithick,join=round,fill=white!75!black] (9.1,5) -- (8.9,5);
\path[draw=penColor,semithick,join=round,fill=white!75!black] (9,5.9) -- (9,6.1);
\path[draw=penColor,semithick,join=round,fill=white!75!black] (9.1,6) -- (8.9,6);
\path[draw=penColor,semithick,join=round,fill=white!75!black] (8,5.9) -- (8,6.1);
\path[draw=penColor,semithick,join=round,fill=white!75!black] (8.1,6) -- (7.9,6);
\path[draw=penColor,semithick,join=round,fill=white!75!black] (7,5.9) -- (7,6.1);
\path[draw=penColor,semithick,join=round,fill=white!75!black] (7.1,6) -- (6.9,6);

\end{tikzpicture}%

\end{center}
We claim that for every $k \in \Z_{>0}$, the dilations $kP$ and $kP'$
contain the same number of integer points: $|kP \cap \Z^2| = |kP' \cap
\Z^2|$. One way to see this is to compute this number for every
$k$, which can be done for example using Ehrhart theory.
A more insightful argument is to decompose $P$ and $P'$ as follows:
\begin{center}
\begin{tikzpicture}[y=-1cm]

\definecolor{penColor}{rgb}{0.52941,0.80784,1}
\path[draw=penColor,semithick,join=round,fill=white!75!black,arrows=-to] (1.5,6) -- (9.7,6);
\path[draw=penColor,semithick,join=round,fill=white!75!black,arrows=-to] (6,6.5) -- (6,4.3);
\path (2.3,6.5) node[anchor=base west] {$Q_1$};
\path (5.5,7.2) node[anchor=base west] {\footnotesize{}$x \mapsto x + \binom{4}{0}$};
\path (3.3,4) node[anchor=base west]
  {\footnotesize{}$x \mapsto
    \left(\begin{smallmatrix}
      2&-3\\-1&1
      \end{smallmatrix}\right)
    x + \binom{9}{-3}
  $};
\path (8.1,5.4) node[anchor=base west] {$Q_1'$};
\path (3.7,6.5) node[text=black,anchor=base west] {$Q_2$};
\path (7.5,6.5) node[text=black,anchor=base west] {$Q_2'$};

\path[draw=black,thick,join=round,fill=white!75!black] (3,5.3) -- (5,6) -- (3,6) -- cycle;
\path[draw=black,thick,join=round,fill=white!75!black] (7,5.4) -- (9,6) -- (7,6) -- cycle;
\path[draw=black,thick,join=round,fill=white!75!black] (2.9,5.4) -- (2,6) -- (2.9,6);
\path[draw=black,semithick,join=round,fill=white!75!black,dotted] (2.9,5.4) -- (2.9,6);
\path[draw=black,thick,join=round,fill=white!75!black] (7,5.3) -- (7,5) -- (9,5.9);
\path[draw=black,semithick,join=round,fill=white!75!black,dotted] (7,5.3) -- (9,5.9);

\path[draw=penColor,semithick,join=round,fill=white!75!black] (5,5.9) -- (5,6.1);
\path[draw=penColor,semithick,join=round,fill=white!75!black] (5.1,6) -- (4.9,6);
\path[draw=penColor,semithick,join=round,fill=white!75!black] (4,5.9) -- (4,6.1);
\path[draw=penColor,semithick,join=round,fill=white!75!black] (4.1,6) -- (3.9,6);
\path[draw=penColor,semithick,join=round,fill=white!75!black] (3,5.9) -- (3,6.1);
\path[draw=penColor,semithick,join=round,fill=white!75!black] (3.1,6) -- (2.9,6);
\path[draw=penColor,semithick,join=round,fill=white!75!black] (2,5.9) -- (2,6.1);
\path[draw=penColor,semithick,join=round,fill=white!75!black] (2.1,6) -- (1.9,6);
\path[draw=penColor,semithick,join=round,fill=white!75!black] (2,4.9) -- (2,5.1);
\path[draw=penColor,semithick,join=round,fill=white!75!black] (2.1,5) -- (1.9,5);
\path[draw=penColor,semithick,join=round,fill=white!75!black] (3,4.9) -- (3,5.1);
\path[draw=penColor,semithick,join=round,fill=white!75!black] (3.1,5) -- (2.9,5);
\path[draw=penColor,semithick,join=round,fill=white!75!black] (4,4.9) -- (4,5.1);
\path[draw=penColor,semithick,join=round,fill=white!75!black] (4.1,5) -- (3.9,5);
\path[draw=penColor,semithick,join=round,fill=white!75!black] (5,4.9) -- (5,5.1);
\path[draw=penColor,semithick,join=round,fill=white!75!black] (5.1,5) -- (4.9,5);
\path[draw=penColor,semithick,join=round,fill=white!75!black] (6,4.9) -- (6,5.1);
\path[draw=penColor,semithick,join=round,fill=white!75!black] (6.1,5) -- (5.9,5);
\path[draw=penColor,semithick,join=round,fill=white!75!black] (8,4.9) -- (8,5.1);
\path[draw=penColor,semithick,join=round,fill=white!75!black] (8.1,5) -- (7.9,5);
\path[draw=penColor,semithick,join=round,fill=white!75!black] (9,4.9) -- (9,5.1);
\path[draw=penColor,semithick,join=round,fill=white!75!black] (9.1,5) -- (8.9,5);
\path[draw=penColor,semithick,join=round,fill=white!75!black] (8,5.9) -- (8,6.1);
\path[draw=penColor,semithick,join=round,fill=white!75!black] (8.1,6) -- (7.9,6);
\path[draw=penColor,semithick,join=round,fill=white!75!black] (7,5.9) -- (7,6.1);
\path[draw=penColor,semithick,join=round,fill=white!75!black] (7.1,6) -- (6.9,6);
\path[draw=penColor,semithick,join=round,fill=white!75!black] (9,5.9) -- (9,6.1);
\path[draw=penColor,semithick,join=round,fill=white!75!black] (9.1,6) -- (8.9,6);
\path[draw=penColor,semithick,join=round,fill=white!75!black] (7,4.9) -- (7,5.1);
\path[draw=penColor,semithick,join=round,fill=white!75!black] (7.1,5) -- (6.9,5);

\path (2.5,5.5) node (Q1) {};
\path (6.8,5) node (Q1p) {};
\draw (Q1) [out=95, in=150, ->] to (Q1p);

(5.1,6.3) .. controls (6.4,7.4) .. (7.4,6.3)
\path (4.9,6.3) node (Q2) {};
\path (7.4,6.3) node (Q2p) {};
\draw (Q2) [out=330, in=210, ->] to (Q2p);
\end{tikzpicture}%

\end{center}
These decompositions yield a bijection between the rational
points in $P$ and $P'$ that preserves denominators, hence a bijection $kP \cap \Z^2 \leftrightarrow kP' \cap \Z^2$ for every $k$.
In this paper we address the question of whether every \emph{Ehrhart-equivalence} admits an explanation via \emph{equipartitions}, as happens in the example.  We show the answer to be positive for $3$-dimensional lattice polytopes.

This question is reminiscent of Hilbert's third problem about the equidecomposability of polytopes of the same volume into pieces that can be congruently bijected.
Instead of the volume, the valuation under consideration is the Ehrhart polynomial; instead of rigid motions we consider unimodular transformations.

\subsection{Framework}
We study decompositions of a lattice polytope $P$ into rational polytopes.
That is, $P \subset \R^d$ is the convex hull of finitely many points
in \( \Z^{d} \), and each piece $Q$ in the decomposition will be the
convex hull of finitely many points in \( \Q^{d} \).

The \emph{denominator} of a retional polytope \( Q \) is the minimum positive
integer \( D\in \Z_{\ge 1} \) such that the dilation \( D Q \) is a lattice
polytope.
The dimension \( \dim Q \) of $Q$ is the dimension of the affine
subspace \( \affinehull Q \subseteq \R^{d} \) spanned by \( Q \).
The relative interior $\relint Q$ of $Q$ is the interior of $Q$ inside
$\affinehull Q$. Similarly, any subset of $\R^d$ is relatively open if
it is open inside its affine span.

Ehrhart's Theorem~\cite[Th\'eor\`eme~38]{EhrhartBook} states that the
counting function\\
\( k \mapsto \lvert kQ \cap \Z^d \rvert \) agrees for $k \in \Z_{\ge 1}$
with a quasipolynomial $\Ehr(Q;k)$ of degree $\dim Q$ with period $D$. That is, 
for every $k$ we have
\( \Ehr(Q;k) = c_{\dim Q}(k) k^{\dim Q} + \dotsc + c_{1}(k) k + c_{0}(k) \)
with periodic functions \( c_{0}, \dotsc, c_{\dim Q} \colon \Z \to \Q \)
such that \( c_{\dim Q} \) is not identically zero and \(
c_i(k+D)=c_i(k) \)
for all $i$ and $k$. In particular, if $P$ is a lattice polytope then \( \Ehr(P;k) \) is an honest
polynomial. 

\begin{definition}
\label{defi:equivalent}
Two rational polytopes $Q_1, Q_2 \in \R^d$ are called \emph{Ehrhart-equivalent}
if they have the same Ehrhart quasipolynomial. That is, if \( \lvert
kQ_1 \cap \Z^d \rvert = \lvert kQ_2 \cap \Z^d \rvert \) for all $k \in
\N$.
\end{definition}

\begin{example} \label{example:intervals}
  Consider the $1$-dimensional polytopes \( Q = [0,1] \), \( Q' =
  [1/5,6/5] \) and \( Q'' = [2/5,7/5] \). Then
  \( \Ehr(Q;k) = k+1 \) while
  \( \Ehr(Q';k) =
  \Ehr(Q'';k) = k + c_0(k) \) where 
  \( c_0(k) = 1 \) if \( k \in 5\Z \) and \( c_0(k) = 0 \) else. So
  \(Q' \) and \(Q'' \) are Ehrhart-equivalent to each other but not to
  \(Q\).
\end{example}

The Ehrhart quasipolynomial is invariant under the group $\affinegroup[d] := \glz[d]
\ltimes \Z^{d}$ of lattice preserving affine maps. We call such maps
\emph{unimodular transformations}.
Now we can define what kind of ``nice explanation'' for
Ehrhart-equivalence we seek.

\begin{definition}
  \label{def:slz-equivalent}
  We say that two polytopes $P, Q \subset \R^d$ are
  \emph{$\glz[]$-equi\-de\-com\-posable}
  if there are relatively open simplices 
  \( T_{1}, \dotsc, T_{r} \) and
  unimodular transformations \( U_{1}, \dotsc, U_{r} \in
  \glz[d] \ltimes \Z^{d} \) such that
  \begin{equation*}
    P = \dissect_{i=1}^{r} T_{i}
    \quad \text{and} \quad
    Q = \dissect_{i=1}^{r} U_{i}(T_{i}).
  \end{equation*}
  (Here, \( \dissect \) indicates disjoint union.)
\end{definition}

It would be nice if the converse was true.

\begin{mainquestion}[\protect{\cite[Question~4.1]{HaaseMcAllister}}]
\label{question:main}
Is it true that every pair of Ehrhart-equivalent polytopes are \( \glz[] \)-equidecomposable?
\end{mainquestion}

One case where this is true is when both $P$ and $P'$ admit a unimodular triangulation, that is, a triangulation into simplices that are \emph{$\glz[]$-equivalent} to the standard simplex $\conv(0,e_1,\dots,e_k)$. In this case the Ehrhart quasipolynomial contains the same information as the $f$-vector of such triangulations~\cite{BetkeMcmullen}, and unimodular triangulations with the same $f$-vector clearly yield a \( \glz[] \)-equidecomposition.

This in particular implies a positive answer to the Main Question in dimension two, since all lattice polygons have unimodular triangulations.
Peter Greenberg proved an even stronger statement~\cite[Theorem~2.4]{Greenberg}: 
Ehrhart-equivalent lattice polygons can be related to one another by a sequence of \( \glz[] \)-equidecompositions of a particular type that he calls $1$-triangulated homeomorphisms.
Imre B\'ar\'any and Jean-Michel Kantor ask a similar question under
the stronger hypothesis that $|P \cap \lattice| = |P' \cap \lattice|$
for every super lattice $\lattice \supseteq \Z^d$~\cite{BaranyKantor}.

In dimension $3$ existence of unimodular triangulations does not hold for every lattice polytope. Even more, the following two polytopes $P$ and $P'$ have the same Ehrhart polynomial but $P'$ admits a unimodular triangulation while $P$ does not:
\begin{equation*}
  P = \conv \left[
    \begin{smallmatrix}
      0&1&0&1&2&1\\
      0&0&1&1&1&2\\
      0&0&0&3&3&3
    \end{smallmatrix}
  \right]
  \quad \text{and} \quad
  P' = \conv \left[
    \begin{smallmatrix}
      1&-1&0& 0&1&1\\
      0& 0&1&-1&1&0\\
      0& 0&0& 0&1&-1
    \end{smallmatrix}
  \right].
\end{equation*}
Jean-Michel Kantor conjectures that, in general dimension,  one cannot even find a piece-wise unimodular homeomorphism~\cite[p.~212]{Kantor} between every pair of Ehrhart equivalent lattice polytopes.

It is worth pointing out that the answer to the Main Question turns
out to be negative if we extend it to rational polytopes, even in
dimension $1$.
\begin{example} \label{example:intervals:continued}
  The polytopes \( Q' = [1/5,6/5] \) and \( Q'' = [2/5,7/5] \) from
  Example~\ref{example:intervals} are Ehrhart equivalent but they
  cannot be  \( \glz[] \)-equidecomposable: $Q'$ contains three 
  points from the \( \affinegroup[1] \)-orbit of $1/5$ (namely, $1/5$,
  $4/5$ and $6/5$) while $Q''$ contains only two ($4/5$ and
  $6/5$). See \cite{TurnerWuAlgorithm,TurnerWuWeightInvariant} for
  more examples in dimension $2$.
\end{example}

\begin{remark}
 What happens in the above example is that all points of the form $a/5$ with $a\in \Z\setminus 5\Z$ have the same Ehrhart function, but are not in the same $\affinegroup[1]$-orbit. This can be formalized as follows:

 Let \( \lattice = \frac1q \Z^d \) be a super lattice of $\Z^d$. The group \(
 \affinegroup[d] \) of unimodular transformations acts on the cosets \(
 \lattice/\Z^d \). Denote by \( \OO = \OO(\lattice) = 
 {\scriptstyle\affinegroup[d]} \backslash \lattice / {\scriptstyle\Z^d} \) the
 set of orbits of this action, so that \( [\lambda] \in \OO \) is the orbit
 of \( \lambda \in \lattice \). We  identify an orbit \( o \in \OO
 \) with the corresponding set of \( \lattice \)-points
 \( \{ \lambda \in \lattice \suchthat [\lambda] = o \} \). Then
 \( \glz[] \)-equidecomposable polytopes \( P \), \( P' \) satisfy
 \( | P \cap o | = | P' \cap o | \) for all \( \lattice \) and
 all \(o\). (This approach is different from \cite[\S1.3]{Kantor}.)

   For \( \lattice = \frac15 \Z \supseteq \Z \), we get three orbits \(
   \OO = \{ \Z \,,\, \{\frac15,\frac45\}+\Z \,,\, \{\frac25,\frac35+\Z \} \). The \( 0 \)-dimensional polytopes  \(\{1/5\} \) and \(\{2/5\}\) are Ehrhart-equivalent but not \( \glz[]
   \)-equidecomposable.
 \end{remark}

On the other hand, a weakened version of the Main Question does hold for arbitrary rational polytopes.
If we allow transformations in \( \glz[d] \ltimes \Q^d \), that is, if we
allow rational translations, then any two Ehrhart-equivalent 
polytopes are equidecomposable~\cite[Prop.~4.3]{HaaseMcAllister}.
Observe, however, that for this group of motions the converse
implication fails: the polytopes $Q$ and $Q'$ from
Example~\ref{example:intervals} are equivalent under \( \glz[1]
\ltimes \Q^1 \), but they have different Ehrhart quasipolynomials.
This sublety goes away if we insist on integral vertices~\cite[Cor.~4.4]{HaaseMcAllister}.

\subsection{Result and structure of proof}

The main result of the present paper is that
Ehrhart-equivalence and \( \glz \)-equidecomposability are the same
for \( 3 \)-dimensional lattice polytopes.

\begin{theorem} \label{thm:main}
  Ehrhart-equivalent lattice \( 3 \)-polytopes are
  \( \glz \)-equidecomposable into half-unimodular simplices.
\end{theorem}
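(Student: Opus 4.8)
The plan is to pass, via triangulations, from arbitrary lattice $3$-polytopes to empty tetrahedra, and then to run an invariant-matching argument in a monoid of equidecomposition classes. \emph{Reduction to empty tetrahedra.} Every lattice $3$-polytope $P$ has a triangulation into lattice tetrahedra, and by iterated pulling at non-vertex lattice points one obtains a triangulation $\mathcal T_P$ all of whose tetrahedra are \emph{empty} (their only lattice points are their vertices). Since a polyhedral complex is the disjoint union of the relative interiors of its faces, $P=\dissect_{\sigma\in\mathcal T_P}\relint\sigma$. The crucial point is that the low-dimensional pieces now come in a single type each: a $2$-face of an empty tetrahedron is an empty --- hence \emph{unimodular} --- lattice triangle, a $1$-face is a primitive lattice segment, and the $0$-faces are lattice points, all pairwise $\glz[]$-equivalent. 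Writing $[X]$ for the class of $X$ under $\glz[]$-equidecomposition into half-unimodular simplices (legitimate for everything in sight once the next step is in place), $f_i$ for the number of $i$-faces of $\mathcal T_P$, and $q_\sigma$ for the normalized volume of $\sigma$ (so $\sum_\sigma q_\sigma=6\operatorname{vol}P$), one gets
\[
  [P]\;=\;\sum_{\dim\sigma=3}[\relint\sigma]\;+\;f_2\,[\tau]\;+\;f_1\,[e]\;+\;f_0\,[v],
\]
where $\tau,e,v$ denote an open unimodular triangle, an open primitive segment, and a point. So it suffices to control $[\relint\sigma]$ for empty tetrahedra.

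\emph{The key step.} I would prove that every open empty lattice tetrahedron of normalized volume $q$ is $\glz[]$-equidecomposable into half-unimodular simplices, that its class depends only on $q$, and that it lies in the sub-monoid $M$ generated by a fixed finite list of atoms: one half-unimodular tetrahedron $A_3$, together with $[\tau],[e],[v]$ and the analogous ``halves'' $[h_2],[h_1]$ in dimensions $2$ and $1$. The input is White's classification: an empty tetrahedron is unimodularly equivalent to $T_{p,q}=\conv\{(0,0,0),(1,0,0),(0,0,1),(p,q,1)\}$, which has lattice width $1$ along the last coordinate. Slicing $T_{p,q}$ by the single admissible half-integral hyperplane $\{x_3=\tfrac12\}$ produces two denominator-$2$ polytopes whose cross-section is a parallelogram of area $q/4$; after dilating by $2$ these become honest lattice $3$-polytopes with many lattice points, which one triangulates by pulling and then rescales by $\tfrac12$ to obtain half-unimodular pieces. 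Carrying this out with an induction on $q$ --- each slice being governed by empty tetrahedra of strictly smaller volume together with lower-dimensional atoms --- shows that $[\relint T_{p,q}]$ is independent of $p$ and is expressible through the fixed atoms.

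\emph{Conclusion.} By the two steps above, the equidecomposition class of every lattice $3$-polytope lies in $M$, and the Ehrhart quasipolynomial defines a monoid homomorphism $M\to\Q[k]$ that is additive over disjoint unions. Because the generators of $M$ have Ehrhart (quasi)polynomials of the pairwise distinct degrees $3,2,1,0$, their images are linearly independent over $\Q$, so this homomorphism is \emph{injective} on $M$. Hence two lattice $3$-polytopes with equal Ehrhart polynomial have the same class in $M$, that is, they are $\glz[]$-equidecomposable into half-unimodular simplices; concretely the common dissection uses $6\operatorname{vol}P$ copies of $A_3$ together with numbers of the remaining atoms read off from the lower Ehrhart coefficients.

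\emph{Main obstacle.} The heavy lifting is entirely in the key step. One must show, uniformly in $p$ and $q$, that the dilated-and-rescaled halves of $T_{p,q}$ admit triangulations in which \emph{every} piece is half-unimodular, and that --- once the lower-dimensional faces of all these dissections are accounted for --- everything collapses onto the fixed finite atom list so that the induction on $q$ actually closes. Pinning down the correct normal forms for the sliced halves and verifying that no denominator beyond $2$ is ever forced is where essentially all the geometry of empty lattice $3$-simplices is used, and it is the reason the argument is special to dimension $3$.
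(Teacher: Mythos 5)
Your overall strategy coincides with the paper's: triangulate into empty tetrahedra, use White's theorem to slice each $T(p,q)$ at the hyperplane $x_3=\tfrac12$, dissect the two halves into relatively open half-unimodular pieces drawn from a fixed finite list, and recover the multiplicities from the Ehrhart quasipolynomial by linear independence. The first genuine gap is in your key step. Triangulating the dilated halves $2\tetra^{\pm}$ \emph{by pulling} and rescaling by $\tfrac12$ does not yield half-unimodular pieces: pulling produces \emph{empty} tetrahedra, and in $\R^3$ empty tetrahedra need not be unimodular --- that is exactly White's phenomenon. What one needs is that the halves of $2T(p,q)$ admit genuinely \emph{unimodular} triangulations; this is the nontrivial Kantor--Sarkaria/Santos--Ziegler input, which the paper realizes explicitly by triangulating the mid-level parallelogram $Q(p,q)$ along two monotone lattice paths $a_0\dots a_q$ and $b_0\dots b_q$ through its $q-1$ interior half-integer points and coning to the lattice vertices. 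Your proposed induction on $q$ is not set up (slicing at height $\tfrac12$ does not produce smaller empty tetrahedra), so the obstacle you correctly identify as the heart of the matter is attacked with a tool that fails.

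The second gap is in the independence argument. Your atoms do not have ``pairwise distinct degrees'': the list contains two atoms each in degrees $1$ and $2$, so degree alone cannot give injectivity. Independence of the correct seven atoms is true, but it requires the parity structure (for each $i$, the quasipolynomial of the open $\Delta_i^0$ vanishes at odd $k$ while that of $\Delta_i^1$ does not; equivalently, the triangular table of evaluations at $k=1,\dots,7$). More seriously, there are \emph{nine} unimodular classes of half-unimodular simplices in $\R^3$, and their nine Ehrhart quasipolynomials lie in a space of dimension $2\cdot 3+1=7$, hence are linearly \emph{dependent}. The multiplicity-matching in your conclusion therefore collapses unless the dissections provably avoid the two classes $\Delta'_2$ and $\Delta'_3$ (simplices with no lattice points whose affine span does contain lattice points). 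The paper secures this by arranging that every tetrahedron of the triangulations of $\tetra^{\pm}$ contains a lattice vertex, so that all closed cells are copies of $\Delta_3^1$ and all their open faces fall into the seven independent types; this constraint is absent from your sketch and is precisely where the geometry must be controlled.
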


Here a half-unimodular simplex is a simplex whose second dilation is a unimodular simplex (Definition~\ref{defi:half_unimodular}). 

The two ingredients in the proof of Theorem~\ref{thm:main} are a classification of half-unimodular simplices in dimension three (Section~\ref{sec:half_unimodular}) together with the fact that all empty tetrahedra (hence all lattice simplices in $\R^3$) admit a decomposition into relatively open half-unimodular simplices. 
The latter is well-known~\cite{KantorSarkaria,SantosZiegler} but in Section~\ref{sec:empty} we show that the decomposition uses using only half-unimodular simplices of certain types. In Section~\ref{sec:proof} we show that these types have Ehrhart quasipolynomials that are linearly independent in the vector space of all quasipolynomials, which implies that the decompositions constructed in Section~\ref{sec:empty} for Ehrhart-equivalent polytopes $P_1$ and $P_2$ use exactly the same number of half-unimodular simplices of each type, hence providing a $\glz[]$-equidecomposition.

\section{Classification of half-unimodular simplices in $\R^3$}
\label{sec:half_unimodular}
In this section we will give a full classification of half-unimodular simplices under $\Z^3$-equivalence, with the following definition.

\begin{definition}
\label{defi:half_unimodular}
An  $i$-simplex $\Delta$ in $\R^n$ is called \emph{half-unimodular} if $2\Delta$ is a unimodular lattice simplex. That is,
\[
2\Delta \cong \conv(0,e_1,\dots,e_i),
\]
where $e_1,\dots, e_n\in \Z^n$ are the standard basis in $\R^n$
\end{definition}

For any $n$, we have at least the following $2n-1$ half-unimodular simplices:
\begin{align*}
\Delta^1_i&:= \frac12\conv(0, e_1,\dots, e_i), \quad i=\{0,\dots,n\},
\\
\Delta^0_i&:= \frac12\conv(e_1,\dots, e_{i+1}), \quad i=\{0,\dots,n-1\}.
\end{align*}
Observe that the subindex denotes dimension and the superindex $0$ or $1$ denotes the number of lattice points, so these simplices are indeed not Ehrhart-equivalent to one another. 

For $n=3$ there are additionally the following triangle and tetrahedron:
\begin{align*}
\Delta'_2&:= \left(\frac12,\frac12, 0 \right) + \frac12\conv(0, e_1,e_2),\\
\Delta'_3&:= \left(\frac12,\frac12, 0 \right) + \frac12\conv(0, e_1,e_2,e_3).
\end{align*}
None of $\Delta'_2$ and $\Delta'_3$ contain lattice points (which shows they are not equivalent to 
$\Delta^1_2$ and $\Delta^1_3$), and $\Delta'_2$ is distinguished from $\Delta^0_2$ by the fact that its affine span contains lattice points. (In particular, $\Ehr(\Delta^0_2, 2k+1)=0$ for all $k$ while 
$\Ehr(\Delta'_2, 2k+1)>0$ for sufficiently big $k$).

\begin{lemma}
\label{lemma:classification}
Every half-unimodular simplex in $\R^3$ is equivalent to one of the nine defined above.
\end{lemma}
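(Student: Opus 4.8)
The plan is to enumerate half-unimodular $3$-simplices by exploiting the constraint that the second dilation is unimodular. If $\Delta$ is an $i$-simplex with vertices $v_0,\dots,v_i \in \tfrac12\Z^3$, then by definition there is a unimodular transformation carrying $2\Delta = \conv(2v_0,\dots,2v_i)$ to $\conv(0,e_1,\dots,e_i)$; composing with the half-scaling, we may assume $2v_0 = 0$ and $2v_j = e_j$, i.e.\ $v_0 = 0$ and $v_j = \tfrac12 e_j$ for $j=1,\dots,i$. However, this normalization used \emph{all} of $\glz[3] \ltimes \tfrac12\Z^3$, whereas the classification is only up to $\glz[3] \ltimes \Z^3$. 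So the real content is: two such ``model'' simplices $\tfrac12\conv(0,e_1,\dots,e_i)$ translated by vectors in $\tfrac12\Z^3$ are $\Z^3$-equivalent precisely when the translation vectors differ by something in the appropriate orbit. Concretely, I would fix $i$ and ask which translates $t + \tfrac12\conv(0,e_1,\dots,e_i)$, for $t \in \tfrac12\Z^3$, are $\Z^3$-equivalent to each other, then pick one representative per class.

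First I would handle the dimension count $i$ separately ($i=0,1,2,3$), since an $i$-simplex and a $j$-simplex with $i\neq j$ cannot be equivalent. For each $i$, after the normalization above I reduce to classifying cosets: writing $t = \tfrac12 w$ with $w \in \Z^3$, translation by a full lattice vector is a unimodular transformation, so only $w \bmod 2$ matters, leaving $2^3 = 8$ candidate classes of $w$ to consider; but the stabilizer of the model simplex $\tfrac12\conv(0,e_1,\dots,e_i)$ inside $\glz[3]$ (the linear part of the unimodular group) acts on these classes, and I would quotient by that action. For $i=3$ the model is $\tfrac12\conv(0,e_1,e_2,e_3)$ and its stabilizer in $\glz[3]$ is the full symmetric group $S_4$ permuting the four vertices $0, \tfrac12 e_1, \tfrac12 e_2, \tfrac12 e_3$; computing the induced action on $(\Z/2)^3$ should collapse the $8$ classes to the two representatives $\Delta^1_3$ (take $w \equiv 0$, so $t=0$) and $\Delta'_3$ (take $w \equiv (1,1,0)$, giving the translate by $(\tfrac12,\tfrac12,0)$), with all other residues being equivalent to one of these via a coordinate permutation together with the $\Z^3$-translation that sends $\tfrac12 e_j \mapsto \tfrac12 e_j - e_j$. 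The subtle point is making sure no residue yields a genuinely new simplex; the distinguishing invariants already flagged in the text — number of lattice points in $\Delta$ and whether $\affinehull \Delta$ contains lattice points — give clean separators to confirm the count. The same analysis for $i=2$ (model $\tfrac12\conv(0,e_1,e_2)$ in a $2$-plane through $0$) must additionally track how $t$ moves the affine plane, producing $\Delta^1_2$, $\Delta^0_2$ and $\Delta'_2$; for $i=1$ one gets $\Delta^1_1, \Delta^0_1$, and for $i=0$ just $\Delta^1_0, \Delta^0_0$.

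The main obstacle will be the $i=2$ case, because a half-unimodular triangle sits in an affine $2$-plane inside $\R^3$, and the relevant equivalence is by the $\glz[3]$-stabilizer of that plane's direction, which is larger and less transparent than the $S_4$ appearing for $i=3$; one must correctly account both for the lattice of the ambient plane and for the ``height'' of the plane modulo $\Z$ in the transverse direction, and this is exactly where the three-way split $\Delta^1_2$ vs.\ $\Delta^0_2$ vs.\ $\Delta'_2$ comes from. I would organize this by first bringing the affine hull of the triangle into a normal form (a coordinate plane or a plane $\{x_3 = c\}$), then classifying within that plane using the two-dimensional theory and the residue of $c$. As a cross-check that the list of nine is complete and non-redundant, I would tabulate for each of the nine simplices the pair $\bigl(\#(\Delta\cap\Z^3),\ \#(\affinehull\Delta \cap \Z^3 \text{ near } \Delta)\bigr)$ together with the dimension, and verify these invariants (or the first few values of the Ehrhart quasipolynomial, which Section~\ref{sec:proof} will need anyway) separate all nine classes — this simultaneously proves distinctness and leaves no room for a tenth simplex once the normalization argument shows every half-unimodular simplex reduces to some translate on the finite list.
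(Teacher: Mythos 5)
Your overall strategy --- normalize every half-unimodular $i$-simplex to a translate $\tfrac12 w+\tfrac12\conv(0,e_1,\dots,e_i)$ with $w\in\Z^3$, observe that only $w\bmod 2$ matters, and then compute the orbits of the eight residues under the symmetries of the model --- is viable and genuinely different from the paper's. The paper instead splits into three cases (the simplex contains a lattice point; its affine hull contains no lattice point; neither), handling the first by fixing the lattice point at the origin and the second by a join trick: $\Delta^0_i$ is the facet opposite the unique lattice vertex of a simplex equivalent to $\Delta^1_{i+1}$, so those cases reduce to the already-settled $\Delta^1$-cases one dimension up. That trick is precisely what spares the paper the orbit computation you propose, and in particular the delicate $i=2$ analysis you correctly identify as the hard spot.

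The gap is in your identification of the group acting on the residues. The stabilizer of $\tfrac12\conv(0,e_1,e_2,e_3)$ inside $\glz[3]$ is only the $S_3$ of permutation matrices (a linear map fixes the origin, which is a vertex); the full $S_4$ of vertex permutations is realized by affine maps whose translation parts lie in $\tfrac12\Z^3\setminus\Z^3$, so only their linear parts, such as $e_1\mapsto -e_1$, $e_2\mapsto e_2-e_1$, $e_3\mapsto e_3-e_1$, are usable, and the induced action on $(\Z/2)^3$ is the \emph{affine} action $w\mapsto(w_1+w_2+w_3+1,\,w_2,\,w_3)$ for the transposition swapping the vertices $0$ and $\tfrac12e_1$. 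The transformations you actually name --- coordinate permutations together with integer translations --- generate a group with \emph{four} orbits on $(\Z/2)^3$, namely $\{000\}$, $\{100,010,001\}$, $\{110,101,011\}$ and $\{111\}$, not two; in particular they never merge $w=(0,0,0)$ with $w=(1,0,0)$, although both simplices contain a lattice point and are equivalent. Carried out literally, your computation leaves four alleged classes of half-unimodular tetrahedra. The same issue recurs for $i=2$ at height $x_3=\tfrac12$: the residues $(0,0,1)$ and $(1,1,1)$ are merged only by a shear such as $(x_1,x_2,x_3)\mapsto(x_1-x_3,x_2-x_3,x_3)$, which acts on the plane $\R^2\times\{\tfrac12\}$ as a \emph{half}-integer translation, so applying ``the two-dimensional theory'' with the lattice $\Z^2$ in that plane would wrongly keep them apart. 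Your invariants can certify that classes are distinct, but never that two residues coincide, so they cannot close this gap; you must exhibit the extra unimodular maps.
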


\begin{proof}
We have already justified that the nine simplices above are non-equivalent. Consider now an arbitrary half-unimodular simplex $\Delta$ of dimension $i$ and let's see that it is equivalent to one of the nine.

Note that every half-unimodular simplex  contains at most one integer vertex. 
If $\Delta$ contains an integer point, without loss of generality assume it to be the origin. Then, the unimodular transformation sending $2\Delta$ to $2 \Delta^1_i$ can be chosen to fix the origin, which implies $\Delta$ is equivalent to $\Delta^1_i$.

For the rest let $H$ be the affine span of $\Delta$. If $H$ does not contain integer points (which implies $i\le 2$), let $a\in \Z^3$ be such that $\conv (\{a\} \cup H)\cap \Z^3 = \{a\}$. Then, $\Delta$ can be characterized as being the facet opposite to the unique integer point in $\Delta_a= \conv (\{a\} \cup \Delta)$. Since $\Delta_a$ is, by the previous paragraph, equivalent to $\Delta^1_{i+1}$, this gives an equivalence between $\Delta$ and $\Delta^0_i$.

Then the only case left is when $H$ contains lattice points but $\Delta$ does not, which can only happen if $i\ge 2$.
Observe that, by definition of half-unimodular simplex, $\Delta$ is equivalent to $p+ \Delta_i^1$, for some $p \in \{0,\frac12\}^3$.
If $i=2$ there is no loss of generality in assuming that $H=\R^2\times \{0\}$, so that the only choice of $p$ that makes $\Delta$ not have integer points is indeed $p=(\frac12, \frac12, 0)$.

If $i=3$ we have four possibilities for $p$, namely
\[
p \in \left\{
\left(\frac12,\frac12,0\right),
\left(\frac12,0,\frac12\right),
\left(0,\frac12,\frac12\right),
\left(\frac12,\frac12,\frac12\right)
\right\}. 
\]
It is left to the reader to check that these four possibilities give equivalent simplices.
\end{proof}

\section{Decomposing empty tetrahedra into half-unimodular simplices}
\label{sec:empty}

The main result in this section is that every lattice $3$-polytope
admits a partition into (relatively open) half-unimodular
simplices using only seven of the nine possible types
described in Section~\ref{sec:half_unimodular}.

Since every lattice
polytope can be triangulated into empty simplices, to prove such a
statement we can restrict ourselves to empty tetrahedra:

\begin{definition}
\label{def:empty}
An \emph{empty simplex} is a lattice simplex with no other lattice points
apart of its vertices.
\end{definition}

The classification of empty tetrahedra is classical and relatively simple:

\begin{theorem}[White 1964~\cite{White}]
\label{thm:white}
Every empty tetrahedron of determinant $q$ is unimodularly equivalent to 
\[
\tetra(p,q):= \conv \{ (0,0,0), (1,0,0), (0,0,1), (p,q,1) \},
\]
for some $p\in\Z$ with $\gcd(p,q)=1$. Moreover, $T(p,q)$ is
$\Z$-equivalent to $T(p',q)$ if and only if $p'=\pm p^{\pm 1} \pmod
q$.
\end{theorem}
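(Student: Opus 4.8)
The plan is to proceed in two stages: first normalize an arbitrary empty tetrahedron to the stated form $T(p,q)$, then analyze the residual equivalences.

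\medskip
\noindent\textbf{Reduction to the normal form.} Let $\Delta = \conv\{v_0,v_1,v_2,v_3\}$ be an empty tetrahedron, and put $q = \det \Delta = [\Z^3 : \lattice']$ where $\lattice'$ is the sublattice generated by the edge vectors $v_i - v_0$. After a unimodular transformation I may assume $v_0 = 0$. The key structural fact I would exploit is that, although $\Delta$ has no interior or boundary lattice points besides its vertices, the \emph{lattice} $\Z^3$ restricted to the affine span of any facet is still all of that two-dimensional lattice; in particular, any facet of $\Delta$ is a unimodular triangle. (If some facet had normalized area $>1$, Pick's theorem would force a lattice point in that facet.) Pick the facet $F = \conv\{0, v_1, v_2\}$; since it is unimodular, I can apply a further transformation fixing the affine span of $F$ and mapping $0, v_1, v_2$ to $0, e_1 = (1,0,0), e_2'$ — but more efficiently, since the two edges at $0$ inside two different facets through $0$ are each part of a lattice basis, I can arrange $v_1 = e_1 = (1,0,0)$ and $v_3 = e_3 = (0,0,1)$ simultaneously (the triangle $\conv\{0,v_1,v_3\}$ is also unimodular, so $\{v_1, v_3\}$ extends to a basis; complete it by $e_2$). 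Writing $v_2 = (a,b,c)$ in this basis, emptiness of the facet $\conv\{0,v_1,v_2\} = \conv\{0,e_1,v_2\}$ forces $b = \pm 1$ and, after the reflection $e_2 \mapsto -e_2$ if needed, $b = 1$; similarly emptiness of $\conv\{0, v_2, v_3\} = \conv\{0, v_2, e_3\}$ gives $b=\pm1$ again, consistently. Then subtracting integer multiples of $e_1$ and $e_3$ (shears fixing $e_2$, hence fixing $v_1$ and $v_3$) I may assume $0 \le a < q$ and reduce $c$; finally the last facet $\conv\{e_1, v_2, e_3\}$ being empty pins $v_2$ to the form $(p, q, 1)$ with $\gcd(p,q) = 1$ — the gcd condition because otherwise the edge from, say, $e_3$ to $v_2$ would pass through a lattice point, or the determinant would not be $q$. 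This gives $\Delta \cong T(p,q)$.

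\medskip
\noindent\textbf{The residual equivalences.} For the "moreover" part I need to determine exactly when $T(p,q) \cong T(p',q)$. One direction is to exhibit explicit unimodular maps realizing $p \mapsto -p$, $p \mapsto p^{-1} \bmod q$, and their composites; the inverse comes from permuting the four vertices and re-normalizing, and the negation from an orientation-reversing coordinate swap. For the converse — that these are the \emph{only} equivalences — I would extract a $\Z$-invariant of $T(p,q)$ that depends on $p$ only through $\pm p^{\pm 1} \bmod q$. The natural candidate is the cyclic structure of the quotient group $\Z^3/\lattice'$: it is cyclic of order $q$, and the images of the four edge directions (equivalently, the "slopes" at which the vertices sit relative to the sublattice) are $0, 0, \pm 1, \pm p$ up to the choice of generator and sign; two such configurations are $\Z$-equivalent iff the multisets $\{0, \pm 1, \pm p\}$ and $\{0, \pm 1, \pm p'\}$ in $\Z/q\Z$ agree up to multiplication by a unit, which unwinds to $p' \equiv \pm p^{\pm1} \pmod q$. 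I would present this via the action of the vertex-permutation group on the normal form.

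\medskip
\noindent\textbf{Main obstacle.} The delicate point is the converse direction of the "moreover": ruling out that some less obvious unimodular transformation (not induced by a vertex permutation) identifies $T(p,q)$ with $T(p',q)$ for $p'$ outside the orbit $\{\pm p^{\pm 1}\}$. The clean way around this is to show that \emph{every} $\Z$-equivalence of empty tetrahedra is, after the normalization above, realized by a permutation of vertices followed by the canonical re-normalization — i.e., the normal form is rigid up to relabeling vertices — and then to make the bookkeeping of how each of the $4! = 24$ vertex permutations acts on the parameter $p$ completely explicit, observing the image is exactly $\{\pm p^{\pm 1} \bmod q\}$. That combinatorial case analysis, while finite and elementary, is where the real work lies.
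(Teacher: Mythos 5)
The paper does not prove this statement at all: it is quoted as a classical result with a citation to White (1964), so there is no in-paper argument to compare against and you are effectively reproving White's theorem from scratch. Your opening observations are fine (every facet of an empty tetrahedron is an empty, hence unimodular, triangle; one may normalize $v_0=0$, $v_1=e_1$, $v_3=e_3$). But the next step is wrong, and it is wrong precisely at the point where the entire content of the theorem lives. With that normalization, $\det\Delta = \det[e_1\,|\,v_2\,|\,e_3] = b$, so $b=\pm q$, not $\pm 1$; already $T(1,2)$ has fourth vertex $(1,2,1)$ with $b=2$, and its facet $\conv\{0,e_1,(1,2,1)\}$ is a perfectly empty triangle. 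Emptiness of a facet only says that the facet is unimodular with respect to the rank-two lattice $\Z^3\cap\affinehull(F)$, which constrains no single coordinate of $v_2$. Your sketch is also internally inconsistent: you derive $b=1$ and then conclude that the same vertex is $(p,q,1)$, whose second coordinate is $q$. Even reading the claim charitably as "$c=\pm 1$", that is exactly White's width-one lemma — that an empty tetrahedron lies between two consecutive lattice planes — and it is the nontrivial heart of the theorem. It cannot be extracted from emptiness of the facets alone: there are tetrahedra all of whose facets are unimodular triangles but which contain interior lattice points (e.g.\ $\conv\{e_1,e_2,e_3,(-1,-1,-1)\}$ contains the origin), so any correct argument must use interior emptiness, typically via a counting or pigeonhole argument on the residues $ip \bmod q$. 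Your proposal simply asserts the conclusion ("the last facet being empty pins $v_2$ to the form $(p,q,1)$") at the step where the proof is needed.

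The second half is closer to viable but is only an outline. Exhibiting maps realizing $p\mapsto -p$ and $p\mapsto p^{-1}\bmod q$ is indeed routine. For the converse, the invariant you gesture at (the multiset of residues of the vertices in the cyclic group $\Z^3/\lattice'$ of order $q$, up to units and sign) is the standard and correct device, but as written it is not yet an argument: you would need to say precisely which residues are attached to which vertices, verify that the multiset is a $\glz$-invariant of the pair (tetrahedron, lattice) rather than of the chosen normal form, and then carry out the case analysis over vertex relabelings — which you yourself flag as "where the real work lies". In short: the reduction to normal form contains a false step masking the main lemma, and the rigidity argument is a plan rather than a proof.
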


The most important feature of this classification is the fact that all
empty tetrahedra have \emph{width one}: they are the convex hull of
two edges lying in consecutive parallel lattice planes. In the
 coordinatization of Theorem~\ref{thm:white} (and in the rest of this section) those
planes are $\R^2\times\{0\}$ and $\R^2\times\{1\}$. In particular, the
half-integer points in $\tetra(p,q)$ are:
\begin{enumerate}
\item Its four vertices,
\item The six mid-points of edges, and
\item The following $q-1$ additional points in the parallelepiped
  $Q(p,q)$ with vertices $\frac12(0,0,1), \frac12(p,q,1), \frac12(1,
  0,1)$, and $\frac12({1+p},q,1)$:
\[
a_i:= \frac12\left(\left\lceil \frac{ip}q\right\rceil,i,1\right) =
\frac12\left( \frac{ip+q - (ip\pmod q)}q,i,1\right) 
, \quad i=1,\dots,q-1.
\]
See a picture of $Q(7,12)$ in~Figure~\ref{fig:Q_12_7}
\end{enumerate}

\begin{figure}
\includegraphics[scale=0.3]{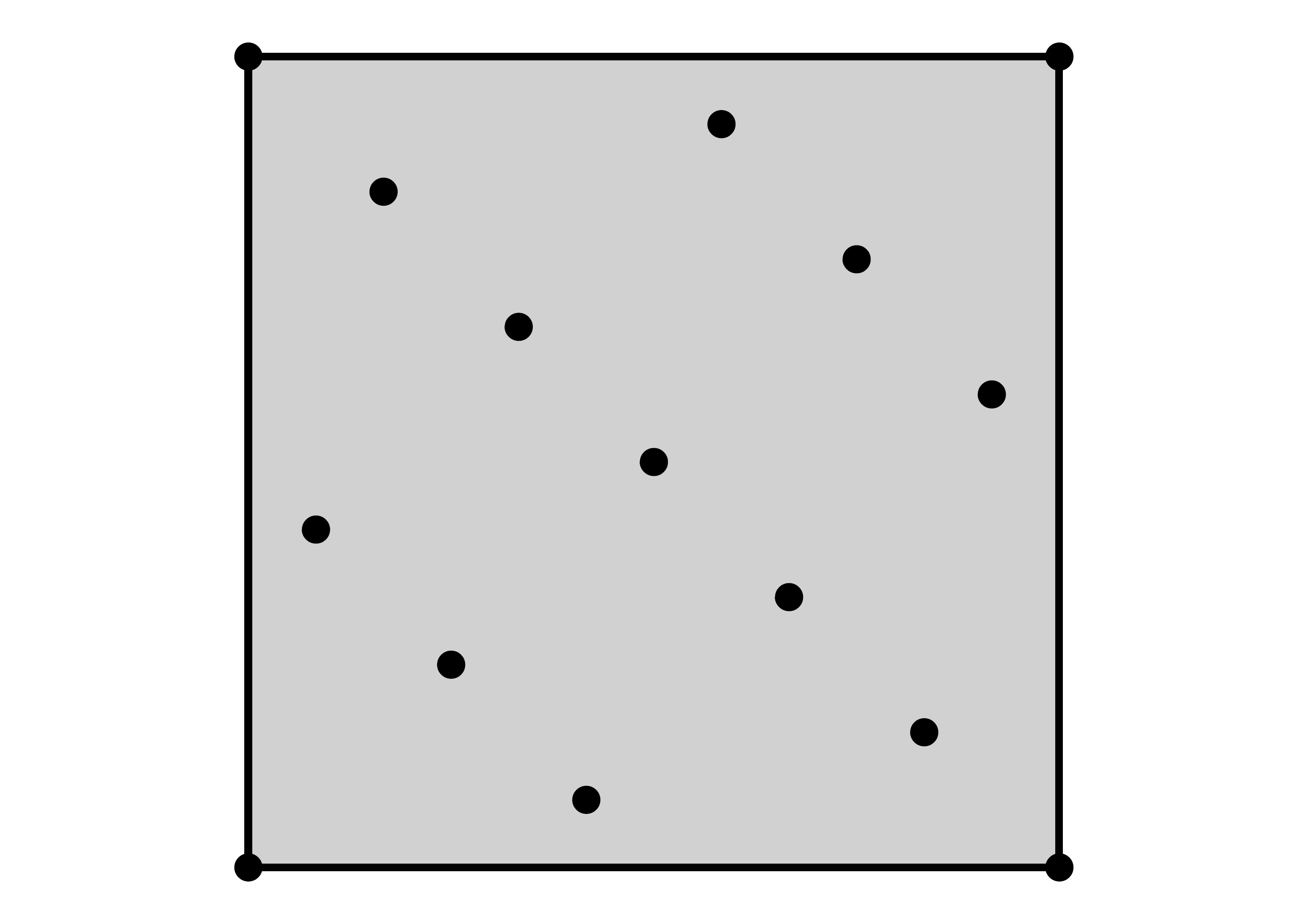}
\caption{A picture of the fundamental parallelogram $Q(7,12)$}
\label{fig:Q_12_7}
\end{figure}

Observe that the second coordinate makes these $a_i$'s form
a naturally ordered sequence. We extend this sequence by setting
$a_0=\frac12(0,0,1)$ and
$a_q=\frac12(p+1,q,1)$ (the choice $a_0=\frac12(1,0,1)$ and
$a_q=\frac12(p,q,1)$ would be equally valid for what follows, but we
do need a choice).

Apart of ordering these $q-1$ points ``vertically''
according to the second coordinate, we can equally order them
``horizontally'' according to the functional $f(x_1,x_2,x_3) =
qx_1-px_2$ that takes the value $0$ on the edge
$\conv\left\{\left(\frac12(0,0,1), \frac12(p,q,1)\right)\right\}$ of $Q(p,q)$ and value $q$
on the opposite edge $\conv\left\{\left(\frac12(1, 0,1),\frac12({1+p},q,1)\right)\right\}$. This
gives a new sequence $\{b_j\}_{j=0}^q$ which coincides as a set with
$\{a_i\}_{i=0}^q$ but where now $f(b_j)=j$. More explicitly, calling
$p'\in\{1,\dots,q-1\}$ the inverse of $-p$ modulo $q$ we have that
\[
b_j:= \frac12\left(\frac{j+(jp'\pmod q)p}q, jp'\pmod q,1\right), \quad
j=1,\dots,q-1.
\]
As before, we extend the sequence with $b_0=\frac12(0,0,1)$ and
$b_q=\frac12(p+1,q,1)$.
See Figure~\ref{fig:Q_12_7}
 for an illustration for $q=12$ and $p=7$, where an affine
transformation has been made so that $Q$ appears as a square in the
picture.

%

With this we can now prove the main result in this section:

\begin{theorem}
\label{thm:triangulations}
Let $\tetra$ be an empty $3$-simplex, and without loss of generality
assume $\tetra\subset \R^2 \times [0,1]$.
Let $\tetra^-=\tetra \cap  (\R^2 \times [0,\frac12])$ and
$\tetra^+=\tetra \cap  (\R^2 \times [\frac12,1])$ be the two halves of
it. Then, both $\tetra^-$ and $\tetra^+$ have half-unimodular
triangulations in which all tetrahedra contain an integer vertex.
\end{theorem}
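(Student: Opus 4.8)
The plan is to exploit the width-one structure of empty tetrahedra together with the explicit description of their half-integer points given just above. Fix the coordinatization $\tetra = \tetra(p,q)$ with the two lattice planes at heights $0$ and $1$, and focus on the bottom half $\tetra^-$ (the top half is symmetric under $x_3 \mapsto 1-x_3$, which is not unimodular but composed with a translation becomes an affine lattice map preserving the construction). The slab $\R^2 \times [0,\tfrac12]$ has the property that all its half-integer points lie at height $0$ (lattice points of $\tetra$) or at height $\tfrac12$ (the midpoints of the four "vertical-ish" edges, plus the $a_i$'s and $b_j$'s in the fundamental parallelepiped $Q(p,q)$). Since a half-unimodular simplex has volume either $\tfrac{1}{2 \cdot 3! \cdot 2^{?}}$ depending on dimension, the natural strategy is: build a triangulation of $\tetra^-$ whose vertices are exactly these half-integer points, and argue that each tetrahedron of it, having all vertices among the half-integer points and the right (minimal possible) normalized volume, must be half-unimodular; then ensure each such tetrahedron contains one of the two genuine lattice vertices (the origin $(0,0,0)$ or $(1,0,0)$) at height $0$.

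The key steps, in order, are: (1) Identify the polytope $\tetra^-$ explicitly — it is the convex hull of the two bottom lattice edges and the four height-$\tfrac12$ midpoints, and its intersection with the plane $x_3 = \tfrac12$ is precisely the parallelogram $\tfrac12 Q(p,q)$ (suitably translated), whose relevant half-integer points are $a_0, \dots, a_q$ (equivalently $b_0,\dots,b_q$). (2) Triangulate the height-$\tfrac12$ slice $\tfrac12 Q(p,q)$ using the points $a_i$: here is where the ordering by the second coordinate (the $a_i$-sequence) and the ordering by the functional $f = qx_1 - px_2$ (the $b_j$-sequence) both come in — a "staircase" triangulation of the parallelogram into triangles of the form $\conv(a_i, a_{i+1}, b_j)$ or similar, each of which is (a translate of) a half-unimodular triangle of one of the classified types. (3) Cone each triangle of this planar triangulation down to one of the two lattice points $(0,0,0)$, $(1,0,0)$ at height $0$, choosing which one so that the resulting tetrahedron is full-dimensional, lies inside $\tetra^-$, and — crucially — is half-unimodular (its second dilation unimodular). (4) Verify that these cones, as the planar triangle ranges over the planar triangulation, tile $\tetra^-$ without overlaps: this follows because $\tetra^-$ is itself the join-like region between the bottom edge(s) and the slice, and the cone points are chosen consistently. (5) Finally, pass from the closed triangulation to a partition into relatively open half-unimodular simplices by the standard trick of taking, for each closed simplex, its relative interior and then the relative interiors of its faces, each of which is again half-unimodular (a face of a half-unimodular simplex is half-unimodular). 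Record which of the nine types from Lemma~\ref{lemma:classification} can occur — we expect the $\Delta'$-types to appear here, since the height-$\tfrac12$ slice genuinely contains parallelogram points not equivalent to $\tfrac12$ of a lattice configuration, and to note that at most seven of the nine show up, matching the claim in the section preamble.

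The main obstacle is step (3): choosing, for each planar triangle in the slice $x_3 = \tfrac12$, the correct apex among $(0,0,0)$ and $(1,0,0)$ so that the cone is simultaneously (a) contained in $\tetra^-$, (b) non-overlapping with the cones over neighbouring triangles, and (c) half-unimodular rather than merely a small-volume lattice-ish simplex. The containment and non-overlap conditions are really a single combinatorial condition — that the map "triangle $\mapsto$ apex" is induced by a consistent projection, e.g. that $\tetra^-$ itself retracts onto its bottom edge — and the natural way to handle it is to observe that $\tetra(p,q)$ has a pulling/pushing triangulation from one of its bottom vertices and to refine that. The half-unimodularity in (c) is where the explicit coordinates of $a_i$ and $b_j$ (with the $\lceil ip/q \rceil$ appearing) must be plugged in: one checks that the $3\times 3$ matrix formed by (apex $-$ the three slice-vertices), after doubling, has determinant $\pm 1$, which reduces to the coprimality $\gcd(p,q)=1$ and the defining property of the staircase. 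I expect this to be a finite case check organized by the two orderings, not a deep argument, but it is the step that actually forces the use of the specific types of half-unimodular simplices and hence supports the later linear-independence argument in Section~\ref{sec:proof}.
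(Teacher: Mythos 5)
Your overall strategy (exploit width one, triangulate the mid-height parallelogram $Q$ through the half-integer points $a_i$, and join down to the two bottom lattice vertices) is in the right spirit, but steps (3)--(4) contain a gap that no choice of apexes can repair: the cones over the triangles of a triangulation of the slice $Q$ with apexes among $(0,0,0)$ and $(1,0,0)$ cover only \emph{half} of $\tetra^-$. A volume count makes this precise: the cross-section of $\tetra(p,q)$ at height $t$ has area $q\,t(1-t)$, so $\operatorname{vol}(\tetra^-)=q\int_0^{1/2}t(1-t)\,dt=q/12$, whereas the union of all cones from height~$0$ over any subdivision of $Q$ (which has area $q/4$ and lies at height $\tfrac12$) has volume at most $\tfrac13\cdot\tfrac{q}{4}\cdot\tfrac12=q/24$. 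The missing half is the region between the two cones and the bottom edge, and the paper's proof fills it with a third family of $2q$ tetrahedra that are \emph{not} cones over slice triangles but joins of the half-edges $\conv\left((0,0,0),(\tfrac12,0,0)\right)$ and $\conv\left((\tfrac12,0,0),(1,0,0)\right)$ with the edges $\conv(a_{i-1},a_i)$ of the monotone path. Concretely, the paper splits $Q$ along the $x_2$-monotone path $a_0a_1\cdots a_q$ into two non-convex pieces $Q_l$ and $Q_r$, cones an arbitrary $\Lambda$-unimodular triangulation of $Q_l$ to $(0,0,0)$ and of $Q_r$ to $(1,0,0)$, and then adds these $2q$ edge--edge joins; for $\tetra^+$ it repeats the construction with the $f$-monotone path $b_0\cdots b_q$. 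So the two orderings serve the two halves separately, rather than combining into one mixed ``staircase'' triangulation of the slice as you propose.

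A secondary but substantive error: you ``expect the $\Delta'$-types to appear here.'' They must not, and the statement of the theorem (every tetrahedron contains an integer vertex) is engineered precisely to exclude them: a half-unimodular tetrahedron with a lattice vertex is equivalent to $\Delta_3^1$, and the relatively open faces that then occur are only the seven types $\Delta_i^1$ ($i\le 3$) and $\Delta_i^0$ ($i\le 2$). If $\Delta'_2$ or $\Delta'_3$ appeared, the argument of Section~\ref{sec:proof} would break, since by Proposition~\ref{prop:basis} the space of relevant quasipolynomials has dimension only $2n+1=7$ and the counting of pieces of each type would no longer be forced by the Ehrhart quasipolynomial.
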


\begin{proof}
We triangulate both parts separately. In both cases, observe that we
can consider the quadrilateral $Q=\tetra^-\cap\tetra^+$ as a lattice
parallelogram with $q-1$ interior lattice points and only its vertices
as boundary lattice points, with respect to the lattice $\Lambda=
\frac12\Z^3$.

To triangulate $\tetra^-$, consider the path of vertices $a_0a_1\dots
a_q$, which is monotone with respect to the coordinate $x_2$ and
divides $Q$ into two (non-convex) parts $Q_l$ and $Q_r$. Triangulate
$Q_l$ and $Q_r$ arbitrarily but using all lattice points as vertices,
which gives exactly $2q$ triangles in total (and, although this is
less important, $q$ in each of $Q_l$ and $Q_h$). These two
triangulations, call them ${\mathcal T}_l$ and ${\mathcal T}_r$, are unimodular with respect
to $\Lambda$.
Figure~\ref{fig:tri-Q_12_7} shows them for $q=12$ and $p=7$.

\begin{figure}
\includegraphics[scale=0.15]{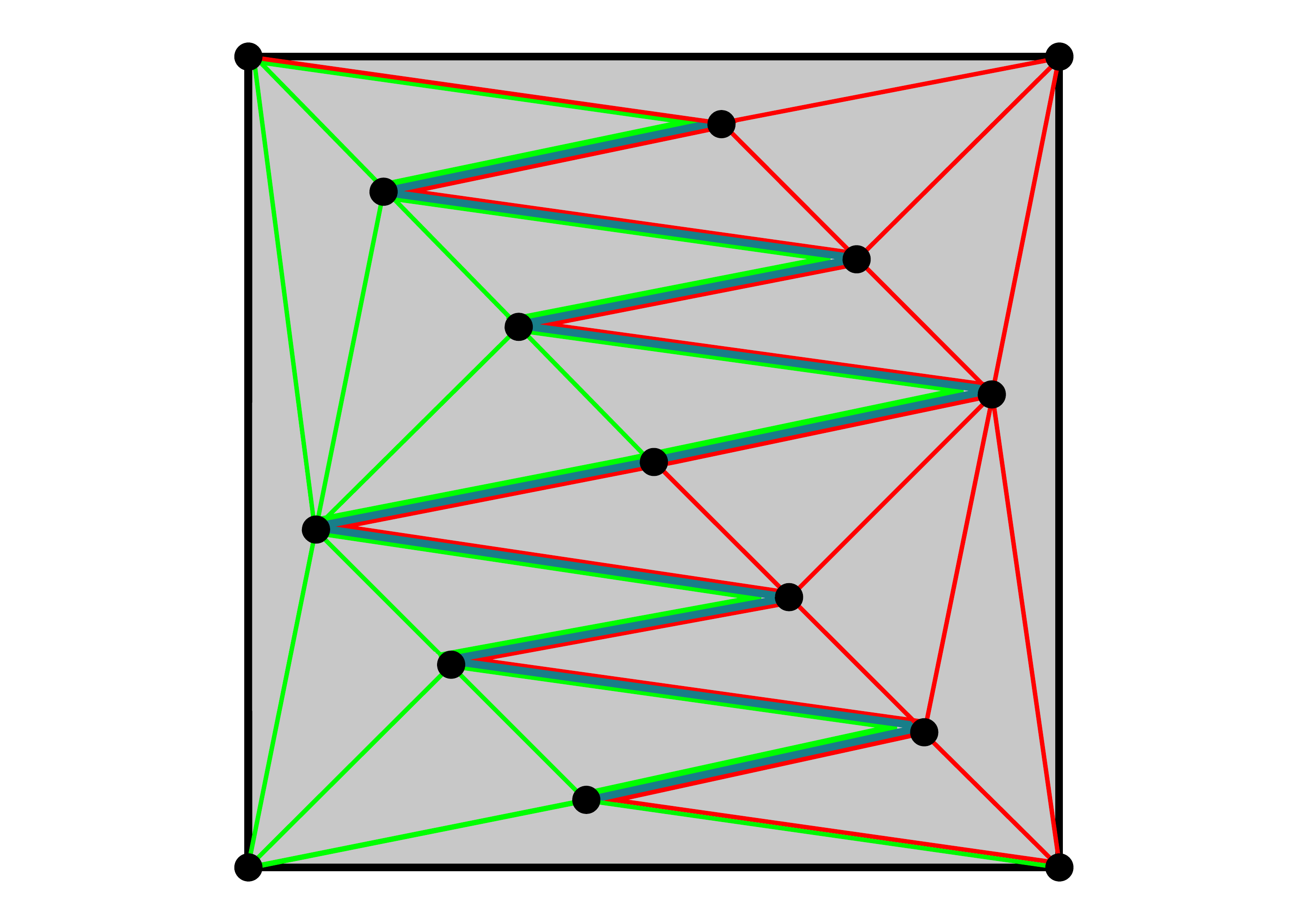}
\includegraphics[scale=0.25]{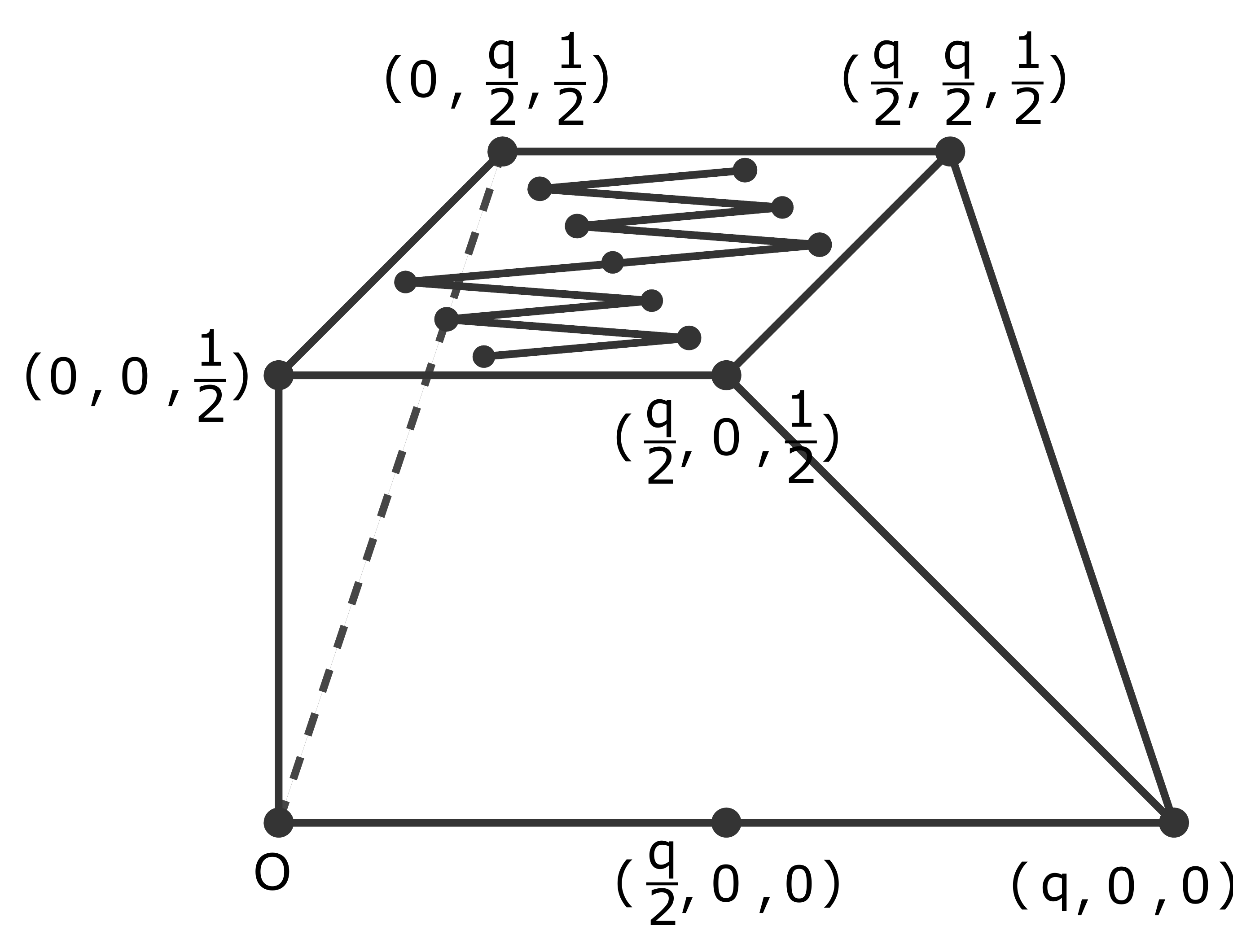}
\caption{A triangulation of the fundamental parallelogram $Q(7,12)$ using the monotone path $a_0a_1\dots a_q$. It is shown both in the quadrilateral $Q(7,12)$ and in the bottom half $\tetra^-$ of $\tetra(7,12)$}
\label{fig:tri-Q_12_7}
\end{figure}

The triangulation $\mathcal T^-$ of $T^-$ consists of:
\begin{itemize}
\item The $q$ tetrahedra obtained joining ${\mathcal T}_l$ to $(0,0,0)$.
\item The $q$ tetrahedra obtained joining ${\mathcal T}_r$ to $(1,0,0)$.
\item The following $2q$ tetrahedra, two for each $ i=1,\dots q$:
\[
\conv\left((0,0,0),\left(\frac12, 0,0\right), a_{i-1},a_i\right),
\quad
\conv\left(\left(\frac12,0,0\right), (1, 0,0), a_{i-1},a_i\right)
.
\]
\end{itemize}
All these simplices are half-unimodular: for the first two groups it follows from the fact that they are the join of a half-unimodular triangle and a point at distance $\frac12$ from the hyperplane containing it; for the last group it is an easy calculation to verify it.
It is also clear by construction that each of these tetrahedra contains one (and only one) of the two integer points in $\tetra^-$. 
We omit to proof that $\mathcal T^-$ is a triangulation, but see Remark~\ref{rem:hand_waving} below.

For $\tetra^+$ we use the same idea, except now we use the path
$b_0b_1\dots b_q$, which is monotone with respect to the functional
$f$ constant on the second pair of edges of $Q$. Again, this path divides $Q$
into two (non-convex) parts $Q_u$ and $Q_d$ that we triangulate as
before, producing $\mathcal T_u$ and $\mathcal T_d$, and we take as triangulation $\mathcal T^+$ of
$\tetra^+$:
\begin{itemize}
\item The $q$ simplices obtained joining $\mathcal T_d$ to $(0,0,1)$.
\item The $q$ simplices obtained joining $\mathcal T_u$ to $(p,q,1)$.
\item The following $2q$ tetrahedra, two for each $ i=1,\dots q$:
\[
\conv\left((0,0,1),\left(\frac{p}2,\frac{q}2,1\right), b_{j-1}, b_j\right),
\quad
\conv\left(\left(\frac{p}2,\frac{q}2,1\right),(p,q,1), b_{j-1}, b_j\right).
\]
\end{itemize}
\end{proof}

\begin{remark}
\label{rem:hand_waving}
In this proof we skipped some details, in particular the proof that the
sets of tetrahedra $\mathcal T^+$ and $\mathcal T^-$ so defined indeed triangulate $\tetra^-$ and
$\tetra^+$. But these triangulations we construct are nothing but
(scaled down versions of) the lattice triangulations of the upper and
lower halves of $2\tetra(p,q)$ that appear
in~\cite[Sect.~4]{SantosZiegler} and are implicit
in~\cite[Sect.~2]{KantorSarkaria} (see also
\cite[Sect.~9.3.2]{deLoeraRambauSantos} or~\cite[Sect.~4.1]{ut}).
\end{remark}

\begin{corollary}
\label{coro:triangulations}
Every lattice $3$-polytope admits a decomposition into relatively open half-unimodular simplices taken from the following seven classes:
\[
\Delta_0^1, \ \Delta_1^1, \ \Delta_2^1, \ \Delta_3^1, \ 
\Delta_0^1, \ \Delta_1^0, \ \Delta_2^0.
\]
\end{corollary}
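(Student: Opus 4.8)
The plan is to reduce Corollary~\ref{coro:triangulations} to a statement about a single empty simplex, settle the low-dimensional cases by inspection, and apply Theorem~\ref{thm:triangulations} in dimension three. Since every lattice polytope can be triangulated into empty lattice simplices — for instance, any triangulation $\mathcal T$ of $P$ whose vertex set is $P\cap\Z^3$, such as a generic regular triangulation, does the job, because in a simplicial complex a vertex lying in a simplex is a vertex of that simplex — fix such a $\mathcal T$; being a geometric simplicial complex with support $P$, it yields $P=\dissect_{\sigma\in\mathcal T}\relint\sigma$. It therefore suffices to prove: for every empty lattice simplex $\sigma$ with $\dim\sigma\le 3$, the set $\relint\sigma$ admits a partition into relatively open simplices, each $\glz[]$-equivalent to one of the seven classes $\Delta^1_0,\Delta^1_1,\Delta^1_2,\Delta^1_3,\Delta^0_0,\Delta^0_1,\Delta^0_2$.

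I would next isolate an elementary observation and two uses of it. \emph{If $\tau$ is a simplex contained in a polytope $K$ and $F$ is a face of $K$, then $\relint\tau\cap F$ is either empty or all of $\relint\tau$, and in the latter case $\tau\subseteq F$.} (Take a functional $\ell$ with $\ell\le c$ on $K$ and $F=K\cap\{\ell=c\}$; on $\tau$ the maximum of $\ell$ is attained on a face of $\tau$, which can meet $\relint\tau$ only if it is all of $\tau$, forcing $\ell\equiv c$ on $\tau$.) \emph{First use:} if a simplex $\sigma$ is partitioned into relatively open simplices $U_1,\dots,U_r$, then each $U_t$ lies in $\partial\sigma$ or in $\relint\sigma$, so discarding those inside $\partial\sigma$ gives a partition of $\relint\sigma$ — hence it is enough to partition each \emph{closed} empty $\sigma$ into relatively open simplices from the seven classes, and we may freely use relative interiors of faces of any half-unimodular triangulation, indeed of any decomposition, rather than insisting on a triangulation of $\sigma$ itself. \emph{Second use:} by Lemma~\ref{lemma:classification} a half-unimodular simplex containing a lattice vertex is $\glz[]$-equivalent to $\Delta^1_k=\tfrac12\conv(0,e_1,\dots,e_k)$, and the faces $\tfrac12\conv(S)$, $\emptyset\ne S\subseteq\{0,e_1,\dots,e_k\}$, of $\Delta^1_k$ are — up to equivalence — exactly the classes $\Delta^1_{|S|-1}$ (when $0\in S$) and $\Delta^0_{|S|-1}$ (when $0\notin S$, after permuting coordinates), all among the seven; consequently, in any half-unimodular triangulation whose maximal simplices all contain a lattice vertex, every face of the triangulation — being a face of a maximal simplex — belongs to the seven classes.

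It remains to produce, for each empty $\sigma$ with $\dim\sigma\le 3$, a half-unimodular triangulation whose maximal simplices all contain a lattice vertex — or, in dimension three, to make do with such triangulations of the two halves of $\sigma$. For $\dim\sigma\le 2$ the simplex is unimodular, say $\sigma=\conv(0,e_1,\dots,e_k)$, and one simply writes the triangulation down: the point itself for $k=0$; the segments $\conv(0,\tfrac12 e_1)$ and $\conv(\tfrac12 e_1,e_1)$ for $k=1$; and the four triangles $\conv(0,\tfrac12 e_1,m),\ \conv(0,\tfrac12 e_2,m),\ \conv(\tfrac12 e_1,e_1,m),\ \conv(\tfrac12 e_2,e_2,m)$, with $m=\tfrac12(e_1+e_2)$, for $k=2$; each maximal cell visibly contains a vertex of $\sigma$. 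For $\dim\sigma=3$, White's Theorem~\ref{thm:white} lets us assume $\sigma\subset\R^2\times[0,1]$, and Theorem~\ref{thm:triangulations} provides half-unimodular triangulations $\mathcal T^-$ of $\sigma^-$ and $\mathcal T^+$ of $\sigma^+$ whose (necessarily $3$-dimensional) maximal cells all contain a lattice vertex. These two triangulations need not agree on the parallelogram $Q=\sigma^-\cap\sigma^+$, so they need not assemble into a triangulation of $\sigma$; but since $\sigma=\sigma^-\sqcup(\sigma^+\setminus Q)$ and $Q$ is a face of $\sigma^+$, the observation shows that the relatively open faces of $\mathcal T^-$ together with the relatively open faces of $\mathcal T^+$ not contained in $Q$ partition $\sigma$, and — by the second use above, applied to $\mathcal T^-$ and $\mathcal T^+$ — all of them belong to the seven classes. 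Transporting this decomposition back by the inverse of White's transformation and restricting to $\relint\sigma$ finishes the $3$-dimensional case, hence the proof.

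The only point that genuinely requires work — and it is not new, being exactly the content of Theorem~\ref{thm:triangulations} and the matter acknowledged in Remark~\ref{rem:hand_waving} — is the verification that the explicit families of tetrahedra exhibited there do triangulate $\sigma^-$ and $\sigma^+$, with all maximal cells half-unimodular and containing a lattice vertex. Granting that theorem, everything else above is elementary: the supporting-functional observation, the enumeration of the faces of $\Delta^1_k$, and the one- and two-dimensional triangulations.
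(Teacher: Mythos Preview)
Your proof is correct and follows essentially the same approach as the paper's: reduce to empty simplices, handle low dimensions directly, and for empty tetrahedra invoke Theorem~\ref{thm:triangulations} together with the observation that every face of a half-unimodular tetrahedron containing a lattice vertex lies in one of the seven classes. You supply more detail than the paper---the supporting-functional lemma, the explicit face enumeration of $\Delta^1_k$, and the concrete one- and two-dimensional triangulations---but the structure and key ideas are the same.
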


\begin{proof}
Let $P$ be a lattice polytope.
First decompose $P$ into relatively open empty simplices. Those of dimensions $0$, $1$, and $2$ can then trivially be decomposed into half-unimodular tetrahedra of the types
\[
\Delta_0^1, \ \Delta_1^1, \ \Delta_2^1.
\]
For the ones of dimension three, use the decomposition coming from the triangulations $\mathcal T^+$ and $\mathcal T^-$ from Theorem~\ref{coro:triangulations}. (To make this a decomposition, consider only the simplices lying in the interior of $\tetra^+$ and $\tetra^-$, plus those from the triangulation of, say,  $\tetra^+$ and lying in the relative interior of $Q=\tetra^+\cap\tetra^-$).

The fact that all tetrahedra in the triangulations $\mathcal T^+$ and $\mathcal T^-$ have a lattice point implies they are all equivalent to $\Delta_3^1$, whose boundary consists of the following types and numbers of relatively open simplices:
\[
3 \Delta_2^1 + 
\Delta_2^0 + 
3 \Delta_1^1 + 
3 \Delta_1^0 + 
\Delta_0^1 + 
3 \Delta_0^0.
\]
\end{proof}

\section{Putting the pieces together}
\label{sec:proof}

For each of the half-unimodular simplices $\Delta_i^j$, $i\in \Z_{\ge 1}$, $j=0,1$ from Section~\ref{sec:half_unimodular} let
\centerline{$E_i^j(k) \coloneqq  \# (k\relint(\Delta_i^j) \cap \Z^3)$}
denote the Ehrhart function of the \emph{relative interior} of
$\Delta_i^j$, which is a quasipolynomial of period two.

\begin{proposition}
\label{prop:basis}
The $2n+1$ quasipolynomials 
\[
\{E_i^1: i=0,\dots,n\} \cup \{E_i^0: i=0,\dots,n-1\} 
\]
form a basis for the linear span 
of all Ehrhart quasipolynomials of half-lattice polytopes in $\R^n$.
\end{proposition}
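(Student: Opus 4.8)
The plan is to compute the $2n+1$ quasipolynomials in question explicitly and then read off both linear independence and the spanning property from their closed forms. A stars-and-bars count gives, for $\Delta^1_i=\tfrac12\conv(0,e_1,\dots,e_i)$, whose $k$-th dilated relative interior is $\{x\in\Z^i_{\ge 1}:x_1+\cdots+x_i<k/2\}$, the values $E_i^1(2m)=\binom{m-1}{i}$ and $E_i^1(2m+1)=\binom{m}{i}$; for $\Delta^0_i=\tfrac12\conv(e_1,\dots,e_{i+1})$ the affine span lies on $\{\sum x_j=\tfrac12\}$, which carries no lattice points, so $E_i^0(2m+1)=0$, while in even dilations $m\conv(e_1,\dots,e_{i+1})$ is a unimodular $i$-simplex and hence $E_i^0(2m)=\binom{m-1}{i}$. (Equivalently, all of this is Ehrhart--Macdonald reciprocity applied to the standard simplex and to its halving.)

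For linear independence I would test a hypothetical relation $\sum_{i=0}^n a_iE_i^1+\sum_{i=0}^{n-1}b_iE_i^0\equiv 0$ first on the odd arguments $k=2m+1$, where every $E_i^0$ vanishes: this leaves $\sum_i a_i\binom{m}{i}\equiv 0$, forcing all $a_i=0$ since $\binom{m}{0},\dots,\binom{m}{n}$ are linearly independent polynomials in $m$; then the remaining relation, evaluated at even arguments $k=2m$, reads $\sum_i b_i\binom{m-1}{i}\equiv 0$, so all $b_i=0$ as well.

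For the span I would locate $V:=\operatorname{span}\{E_i^1,E_i^0\}$ intrinsically inside the $(2n+2)$-dimensional space of period-$\le 2$, degree-$\le n$ quasipolynomials. Writing such an $f$ as the pair $p(m)=f(2m)$, $q(m)=f(2m+1)$ and expanding $q=\sum_i a_i\binom{m}{i}$ in the Newton basis, the combination $\sum_i a_iE_i^1$ matches $q$ on the odd class and equals $\sum_i a_i\binom{m-1}{i}$ on the even class; hence $f\in V$ iff $p(m)-\sum_i a_i\binom{m-1}{i}$ has degree $\le n-1$, in which case it is of the form $\sum_{i<n}b_i\binom{m-1}{i}$ and we correct by $\sum_i b_iE_i^0$. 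That is, $V$ is exactly the codimension-one subspace on which $p$ and $q$ have the same $k^n$-coefficient. Now for any half-lattice polytope $Q\subset\R^n$, Ehrhart's Theorem gives $\Ehr(Q;\cdot)$ of period dividing $2$ and degree $\dim Q\le n$, and---this is the one non-formal input---its leading coefficient is the relative volume of $Q$ and in particular its $k^n$-coefficient agrees across residue classes (both equal to $2^n\operatorname{vol}_n(Q)$, or both $0$ when $\dim Q<n$); hence $\Ehr(Q;\cdot)\in V$. Conversely each generator $E_i^1=\Ehr(\relint\Delta^1_i;\cdot)$ and $E_i^0=\Ehr(\relint\Delta^0_i;\cdot)$ is, by inclusion--exclusion over the faces of $\Delta^1_i$ respectively $\Delta^0_i$---all of which are half-lattice polytopes---a $\Z$-linear combination of Ehrhart quasipolynomials of half-lattice polytopes. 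Thus $V$ is precisely the linear span in the statement, and combined with the previous paragraph this exhibits the $2n+1$ listed quasipolynomials as a basis of it.

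The step I expect to need the most care is the middle one: making the Newton-basis bookkeeping between the even and odd branches precise, so as to see that $V$ is \emph{exactly} the ``equal $k^n$-coefficient'' hyperplane and not a proper subspace of it, and invoking residue-independence of the leading Ehrhart coefficient in the right form. The explicit counts and the independence argument are routine.
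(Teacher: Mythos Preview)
Your argument is correct and follows essentially the same route as the paper: the key input in both is that the degree-$n$ coefficient of $\Ehr(Q;k)$ equals the $n$-dimensional volume and hence is the same on both residue classes, cutting the ambient space of period-two degree-$\le n$ quasipolynomials down to dimension $2n+1$, after which independence of the listed family finishes the proof. Your version is more explicit (you compute the $E_i^j$ first and check independence by evaluating at odd and then even arguments, and you take care to note via inclusion--exclusion that the $E_i^j$, being Ehrhart functions of \emph{open} simplices, actually lie in the span of Ehrhart quasipolynomials of closed half-lattice polytopes), whereas the paper argues more tersely via a dimension count and a remark on leading terms.
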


\begin{proof}
Ehrhart quasipolynomials of half-lattice polytopes of dimension at most $n$ have period  two and degree at most $n$, so they can  be written as linear combinations of the following $2n$ quasi-monomials:
\[
1,k,k \dots, k^n, 
(-1)^k,(-1)^kk,\dots, (-1)^kk^n.
\]
But the quasi-monomial  $(-1)^kk^n$ is not used by polytopes in $\R^n$, because the coefficient of degree $n$ in $\Ehr(P;k)$ for a $P \subset \R^n$ equals the $n$-dimensional volume of $P$. This implies that Ehrhart quasipolynomials of half-lattice polytopes in $\R^n$ generate a vector space of dimension at most $2n-1$. The $E_i^0$'s and $E_i^1$'s are independent, since there are two of each degree $i$ and their leading terms are different (see, e.g., the remark below). Hence they form a basis for this vector space.
\end{proof}

\begin{proof}[Proof of Theorem~\ref{thm:main}]
Given a lattice $3$-polytope $P$, we can decompose it into relatively
open half-unimodular simlices of the form $\Delta_i^0$ or $\Delta_i^1$
by Corollary~\ref{coro:triangulations}.
Denoting by $f_i^j$ the number of simplices of type $\Delta_i^j$
in this decomposition for each $i\in\{0,1,2,3\}$ and $j\in\{0,1\}$,
we can write the Ehrhart quasipolynomial of $P$ as
\[
\Ehr(P;k) = \sum_{i,j} f_i^j E_i^j(k)\,.
\]

Conversely, by Proposition~\ref{prop:basis}, if $P$ and $P'$ have the same Ehrhart quasipolynomials then the coefficients in the above expression for $P$ and $P'$ are the same, so that the decompositions of Corollary~\ref{coro:triangulations} for $P$ and $P'$ are in fact $\glz[]$-equi\-decompo\-si\-tions of them.
\end{proof}

\begin{remark}
The Ehrhart quasipolynomials $E_i^0$ and $E_i^1$ admit simple closed formulas.
For the even values of $k$, being a half-unimodular simplex implies that $E_i^j(2k) = \binom{k-1}{i}$. For the odd values of $k$:  
\begin{itemize}
\item For $E_i^0$, the fact that the affine span of $\Delta_i^0$ contains no lattice points implies 
$E^0(2k-1) = 0$. 
Together with the previous fact, this gives:
  \begin{equation}
  \label{eq:quasi0}
  E_i^0(k)= \frac{1+(-1)^k}{2}  \binom{\lceil k/2\rceil-1}{i}.
  \end{equation}

\item For $E_i^1$ we have $E_i(2k) = E_i^1(2k-1)$, which implies
  \begin{equation}
  \label{eq:quasi1}
  E_i^1(k)=  \binom{\lceil k/2\rceil-1}{i}.
  \end{equation}
The reason is that the facet of $(2k-1)\Delta_i^1$ opposite to its unique lattice vertex does not contain lattice points, so enlarging it to $2k\Delta_i^1$ does not add interior lattice points. (See Figure~\ref{fig:delta_31} for an illustration of $7\Delta_3^1$).
\end{itemize}

\begin{figure}
\includegraphics[scale=0.3]{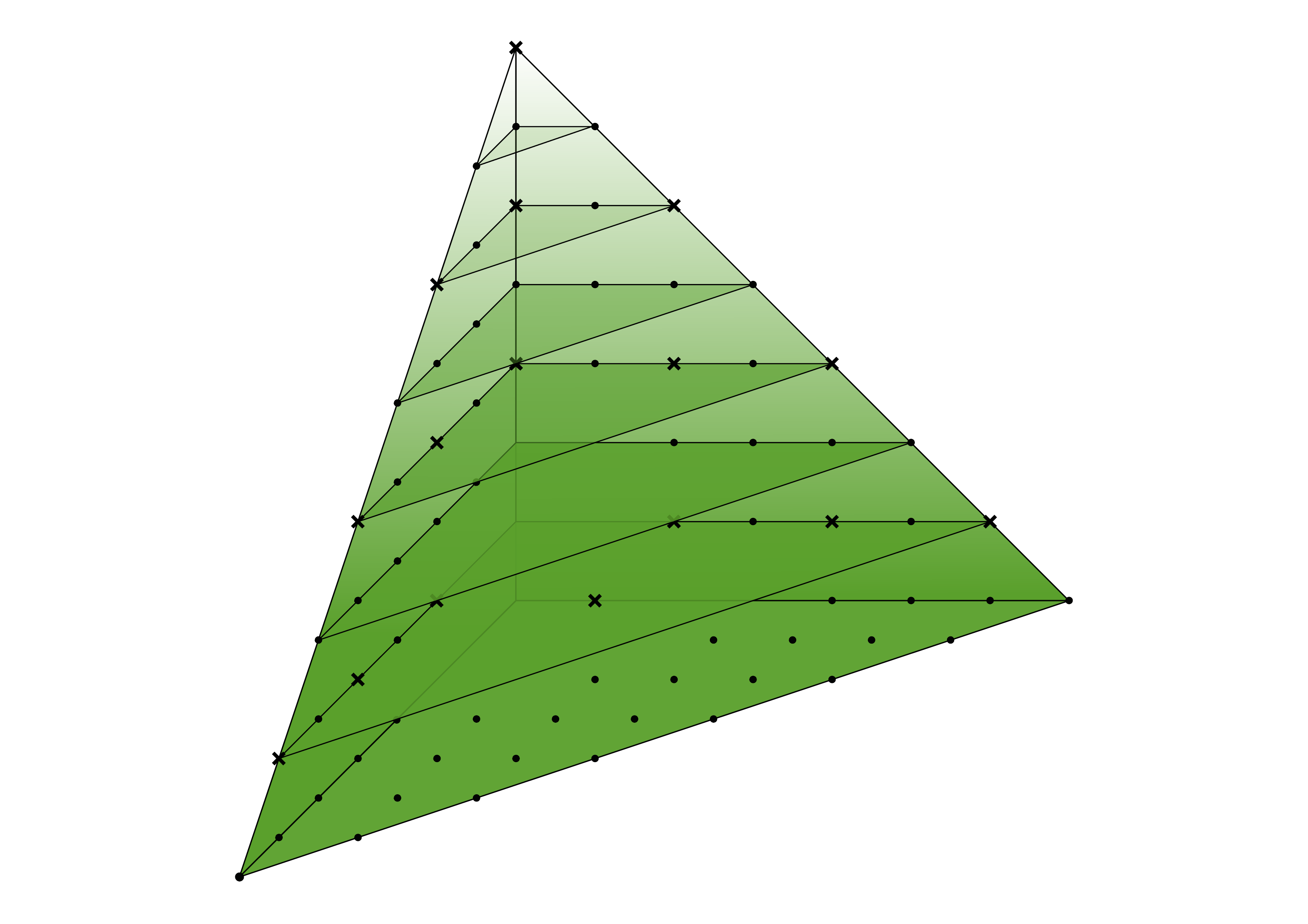}
\caption{A picture of $7\Delta_3^1$. Lattice points appear as crosses and half-lattice points as dots}
\label{fig:delta_31}
\end{figure}

These closed forms readily show that they are independent, since their evaluations start with:
\begin{align*} \label{eq:lattice-basis}
E_0^1 (k) &= \ \ 1\ \ \dots \\
E_0^0 (k) &= \ \ 0\ \ 1\ \ \dots\\
E_1^1 (k) &= \ \ 0\ \ 0\ \ 1\ \ \dots\\
E_1^0 (k) &= \ \ 0\ \ 0\ \ 0\ \ 1\ \ \dots\\
E_2^1 (k) &= \ \ 0\ \ 0\ \ 0\ \ 0\ \ 1\ \ \dots\\
E^0_2 (k) &= \ \ 0\ \ 0\ \ 0\ \ 0\ \ 0\ \ 1\ \ \dots\\
E_3^1 (k) &= \ \ 0\ \ 0\ \ 0\ \ 0\ \ 0\ \ 0\ \ 1\ \ \dots\ .
\end{align*} 
Observe that all the above values can also be checked in appropriate faces of $k\Delta_3^1$, $k=1,\dots,7$ (compare Figure~\ref{fig:delta_31}).
Incidentally, these evaluations (which generalize naturally to higher dimensions) imply that the quasipolynomials of Proposition~\ref{prop:basis} form an integer basis (and not only a linear basis as stated in the proposition)
for the lattice of integer-valued period-two quasipolynomials, which includes all Ehrhart quasipolynomials of half-lattice polytopes.

\end{remark}

\begin{remark}
Formulas~\eqref{eq:quasi0} and~\eqref{eq:quasi1},
together with direct calculations for $E_2'(2k+1)$ and $E_3'(2k+1)$, give us the following  formulas for the nine Ehrhart quasipolynomials of half-unimodular simplicies in $\R^3$ in terms of the quasi-monomials mentioned in the proof of Proposition~\ref{prop:basis}:

\begin{itemize}
\item[] $E_0^1(k)=1$
\item[] $E_1^1(k)=\frac{1}{4}(2k+(-1)^{k}-1)$
\item[] $E_2^1(k)=\frac{1}{16}\left(2k^2-2\left((-1)^{k}+5\right)k+5(-1)^k+11\right)$
\item[] $E_3^1(k)=\frac{1}{96}\left(2k^3-3\left((-1)^{k}+7\right)k^2+\left(21(-1)^k+67\right)k-33(-1)^k-63\right)$
\medskip

\item[] $E_0^0(k)=\frac{1 + (-1)^{k}}2$
\item[] $E_1^0(k)=\frac{1+(-1)^{k}}{4}  (k-2)$
\item[] $E_2^0(k)=\frac{1+(-1)^{k}}{16}(k^2-6k+8)$
\medskip

\item[] $E_2'(k)=\frac{1}{16}\left(2k^2-6\left((-1)^{k}+1\right)k+9(-1)^k+7\right)$
\item[] $E_3'(k)=\frac{1}{96}\left(2k^3-\left(15(-1)^{k}+9\right)k^2+\left(45(-1)^k+43\right)k-51(-1)^k-45\right)
  $
\end{itemize}
\end{remark}

\bibliographystyle{plain}
\bibliography{3d-equidecomposable}

\end{document}